\documentclass[a4paper,11pt]{amsart}



\usepackage[french, ngerman, italian, english]{babel}
\usepackage{amsmath,amsthm, amssymb}
\usepackage{setspace}
\usepackage{anysize}
\usepackage{url}
\usepackage{bm}
\usepackage{graphicx}
\usepackage{amscd}
\usepackage[colorlinks=true]{hyperref}
\usepackage{mathptmx}
\usepackage{paralist}
\usepackage{verbatim}
\usepackage[utf8]{inputenc}


%

\theoremstyle{plain}
\newtheorem{thm}{Theorem}[section]

\newtheorem{cor}[thm]{Corollary}
\newtheorem{lem}[thm]{Lemma}
\newtheorem{prop}[thm]{Proposition}

\theoremstyle{definition}

\newtheorem{os}[thm]{Remark}
\newtheorem{es}[thm]{Example}

\def \chara{{\operatorname{char}}}
\def \cd{{\operatorname{cd}}}

\def \height{{\operatorname{ht}}}

\def \depth{{\operatorname{depth}}}
\def \grade{{\operatorname{grade}}}
\def \Spec{{\operatorname{Spec}}}
\def \Proj{{\operatorname{Proj}}}

\def\Supp{\operatorname{Supp}}

\def\Ext{\operatorname{Ext}}

\def \mm{{\mathfrak{m}}}

\def \nn{{\mathfrak{n}}}

\def \kk{{\kappa}}

\def \PP{\mathbb P}

\def \CC{\mathbb C}

\def \V{\mathcal V}

\def \O{\mathcal O}
\def \F{\mathcal F}

\def \NN{{\mathbb{N}}}

\def \ZZ{{\mathbb{Z}}}
\def \QQ{{\mathbb{Q}}}

\def \aa{{\mathfrak{a}}}

\def \bb{{\mathfrak{b}}}
\def \nn{{\mathfrak{n}}}
\def \kk{{\Bbbk}}

\def \Ha{H_{\mathfrak{a}}}

\def \red{{\operatorname{red}}}

\def \Hom{{\operatorname{Hom}}}
\def \Ext{{\operatorname{Ext}}}

\begin{document}

\title{Cohomological and projective dimensions}
\author{Matteo Varbaro}
\address{Dipartimento di Matematica,
Universit\`a degli Studi di Genova, Italy}
\email{varbaro@dima.unige.it}
\subjclass[2000]{13D45, 14F17}
\keywords{cohomological dimension; projective dimension; depth; perfect ideals}
\date{}
\maketitle

\begin{abstract}
Let $\aa$ be a homogeneous ideal of a polynomial ring $R$ in $n$ variables over a field $\kk$. Assume that $\depth(R/\aa)\geq t$, where $t$ is some number in $\{0,\ldots ,n\}$. A result of Peskine and Szpiro says that, if $\chara(\kk)>0$, then the local cohomology modules $H_{\aa}^i(M)$ vanish for all $i>n-t$ and all $R$-modules $M$. In characteristic $0$, there are counterexamples to this for all $t\geq 4$. On the other hand, when $t\leq 2$, exploiting classical results of Grothendieck, Lichtenbaum, Hartshorne and Ogus it is not difficult to extend the result to any characteristic. In this paper we settle the remaining case, namely we show: If $\depth(R/\aa)\geq 3$, then the local cohomology modules $H_{\aa}^i(M)$ vanish for all $i>n-3$ and all $R$-modules $M$, whatever the characteristic of $\kk$ is.
\end{abstract}

\section{Introduction}

In his seminar on local cohomology \cite[p. 79]{gro1}, Grothendieck raised the problem of finding conditions under which, fixed a positive integer $c$, the local cohomology modules $\Ha^i(R)$ vanish for every $i>c$, where $\aa$ is an ideal in a ring $R$. In other words, looking for conditions under which the cohomological dimension $\cd(R,\aa)\leq c$. Ever since many mathematicians worked on this question (for instance see Hartshorne \cite{Ha}, Ogus \cite{Og}, Hartshorne and Speiser \cite{HS}, Faltings \cite{Fa}, Huneke and Lyubeznik \cite{HL}, Lyubeznik \cite{Ly1}). In this spirit, we will study the relationships between cohomological and projective dimensions. Before explaining the results of the paper, we wish to summarize the essential known facts about this subject.

The two starting results are both due to Grothendieck (see \cite{gro1}): They are essential, as they fix the range in which we must look for the natural number $\cd(R,\aa)$:
$$\height(\aa)\leq \cd(R,\aa)\leq \dim R.$$
Afterwards, first Lichtenbaum and then, more generally, Hartshorne \cite{Ha}, settled the problem of characterizing when $\cd(R,\aa)\leq \dim R-1$. Roughly speaking, they showed that the necessary and sufficient condition for this to happen is that $\dim R/\aa > 0$. By then, the next step should have been to describe when $\cd(R,\aa)\leq \dim R-2$. In general this case is still not understood. However, if $R$ is a complete regular local ring containing a field, a necessary and sufficient condition is that the punctured spectrum of $R/\aa$ is connected. This has been shown in \cite{HS} in positive characteristic and in \cite{Og} in characteristic $0$. In \cite{HL} a characteristic free proof is given. Actually, if the ambient ring $R$ is regular, $\cd(R,\aa)$ can always be characterized in terms of the ring $R/\aa$ (\cite{HS,Og,Ly1}). However, in any of these papers, the described conditions are quite difficult to verify.

A classical result of Peskine and Szpiro obtained in \cite{PS} says that, if $\aa$ is a perfect ideal of a regular local ring of characteristic $p>0$, then $\cd(R,\aa)=\height(\aa)$. Their proof is based on the flatness of the Frobenius map $R\rightarrow R$ which, by the work of Kunz \cite{Ku}, is equivalent to $R$ being regular. We will notice that their idea works for all Noetherian rings of positive characteristic, see Corollary \ref{corcharp} (indeed also in some situations in characteristic $0$, see Lemma \ref{generalthm}), exploiting the acyclicity criterion that Buchsbaum and Eisenbud got in \cite{BE} instead of the result of Kunz. 

In characteristic $0$ the situation is completely different. There are several instances of perfect ideals with high cohomological dimension, see Example \ref{exdet}: Such examples appear even in a regular ambient, thus we stick to the situation in which $R$ is an $n$-dimensional regular local ring containing a field. In such a circumstance, if $\aa$ is a perfect ideal of height $n-2$, then $\cd(R,\aa)=n-2$. More generally, if $\depth(R/\aa)\geq 2$, then $\cd(R,\aa)\leq n-2$, see Proposition \ref{dimension 2}. We know examples of perfect ideals of height $n-4$ and cohomological dimension $n-3$. However we do not know any example of ideals $\aa\subseteq R$ of projective dimension $\leq n-3$ such that $\cd(R,\aa)>n-3$. Therefore we consider the case in which $\depth(R/\aa)\geq 3$. We show in Proposition \ref{lyuthm} that $H_{\aa}^i(R)=0$ for all $i\geq n-1$ and $H_{\aa}^{n-2}(R)_{\mathfrak{p}}=0$ for any prime ideal of $R$ different from the maximal one. As a consequence, we get that the Lyubeznik numbers of a local ring $A$ (defined in \cite{Ly2}) $\lambda_{i,j}(A)$ vanish for all $0\leq j<\depth(A)$ and $i\geq j-1$, see Corollary \ref{lyucor}. At this point, we focus on the special case where $R$ is a polynomial ring in $n$ variables over a field of characteristic $0$ and $\aa$ is a homogeneous ideal such that $\depth(R/\aa)\geq 3$. The main result of the paper is that, under the above assumptions, $\cd(R,\aa)\leq n-3$ (see Theorem \ref{mainthm}). In particular, if $\aa$ is perfect of height $n-3$, then $\cd(R,\aa)=n-3$. As a consequence, in Remark \ref{setdepth} we get several examples of prime ideals $\aa\subseteq R$ such that $R/\aa$ is not set-theoretically Cohen-Macaulay, thereby generalizing a result of Singh and Walther in \cite{SW}. More such examples can be produced using Proposition \ref{hodge}. 

\vspace{2mm}

I am strongly indebted with Vasudevan Srinivas: Without his advice the results of this paper would be much weaker. I also wish to thank Lucian Badescu for some helpful discussions, the anonymous referee for reading the paper extremely carefully (giving me relevant suggestions) and Srikanth Iyengar for explaining me the process of {\it gonflement} of a local ring.

\section{Ideals with small cohomological dimension}

Given an ideal $\aa$ of a Noetherian ring $R$, its {\it cohomological dimension}, denoted $\cd(R,\aa)$, is the least natural number $c$ such that the local cohomology modules  $H_{\aa}^i(M)$ vanish for all $R$-modules $M$ and $i>c$. As it is well known, it is enough to check the condition for $M=R$. We have
\[\cd(R,\aa)\geq \height(\aa),\]
and in this section we will find some ideals for which the above inequality is an equality. It is convenient to remind a definition: An ideal $\aa\subseteq R$ is {\it perfect} if the maximal length of an $R$-regular sequence in $\aa$, namely $\grade(\aa)$, is equal to the projective dimension of $R/\aa$ (in particular a perfect ideal has finite projective dimension). If $R$ is a regular local ring, then $\aa$ is perfect if and only if $R/\aa$ is Cohen-Macaulay. The following fact has many interesting corollaries:

\begin{lem}\label{generalthm}
Let $R$ be a Noetherian ring and $\aa\subseteq R$ a perfect ideal. Assume that for all integers $k\in\NN$ there is a ring-homomorphism $\phi_k:R\rightarrow R$ such that:
\begin{enumerate}
\item[{\em 1}] $\phi_0=1_R$.
\item[{\em 2}] $\phi_i(\aa)\subseteq \phi_j(\aa)$ whenever $i\geq j$.
\item[{\em 3}] The inverse system of ideals $\{\phi_k(\aa)R\}_{k\in\NN}$ is cofinal with $\{\aa^k\}_{k\in\NN}$.
\end{enumerate}
Then $\cd(R,\aa)=\height(\aa)$.
\end{lem}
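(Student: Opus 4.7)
The plan is, for each $k\in\NN$, to produce a free resolution of $R/\phi_k(\aa)R$ of length $c:=\pd_R(R/\aa)=\grade(\aa)$, and then to pass to local cohomology via a cofinality argument. Fix a free resolution $F_\bullet\colon 0\to F_c\to\cdots\to F_0\to 0$ of $R/\aa$; such an $F_\bullet$ exists with this length precisely because $\aa$ is perfect.

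First I would observe that the three hypotheses force the radical of $\phi_k(\aa)R$ to coincide with that of $\aa$: condition (2) with $j=0$ gives $\phi_k(\aa)R\subseteq\phi_0(\aa)R=\aa$, while the cofinality in (3) yields $\aa^m\subseteq\phi_k(\aa)R$ for some $m=m(k)$. In particular $\grade(\phi_k(\aa)R)=\grade(\aa)=c$ for every $k$.

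Next, form the complex $F_\bullet^{(k)}:=R\otimes_{R,\phi_k}F_\bullet$, obtained by applying $\phi_k$ entry-wise to the matrices of the differentials $\varphi_i$; its zeroth homology is $R/\phi_k(\aa)R$, and it has length $c$. The Buchsbaum--Eisenbud acyclicity criterion should then imply that $F_\bullet^{(k)}$ is still acyclic: the rank conditions on the new differentials $\phi_k(\varphi_i)$ are preserved because $\phi_k$ sends the Fitting ideals of $\varphi_i$ to the Fitting ideals of $\phi_k(\varphi_i)$, and the required grade conditions $\grade(\phi_k(I_{r_i}(\varphi_i))R)\geq i$ are controlled, for $i\le c$, via the preservation $\grade(\phi_k(\aa)R)=c\geq i$ established in the previous step (the perfection of $\aa$ ties the grades of the relevant minor ideals to that of $\aa$ itself). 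Acyclicity of $F_\bullet^{(k)}$ then gives $\pd_R(R/\phi_k(\aa)R)\leq c$.

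Finally, cofinality in (3) allows one to rewrite
\[
H_\aa^i(M)=\varinjlim_k\Ext_R^i(R/\phi_k(\aa)R,M),
\]
and every term on the right vanishes for $i>c$ by the projective-dimension bound just established. Combined with the universal inequalities $\height(\aa)\leq\cd(R,\aa)$ and $c=\grade(\aa)\leq\height(\aa)$, this yields $\cd(R,\aa)=\height(\aa)$, as required. The main obstacle is the acyclicity step: applying a ring homomorphism can shrink the ideals of minors and in principle destroy the Buchsbaum--Eisenbud grade conditions, so the delicate point is to leverage the radical-preservation of $\phi_k(\aa)R$ -- i.e.\ that $\grade(\phi_k(\aa)R)=c$ is maintained for all $k$ -- to recover those grade conditions for the base-changed resolution.
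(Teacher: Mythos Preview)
Your approach is correct and coincides with the paper's: establish $\grade(\phi_k(\aa)R)=\grade(\aa)=:g$ from the sandwich $\aa^m\subseteq\phi_k(\aa)R\subseteq\aa$, deduce $\Ext_R^i(R/\phi_k(\aa)R,R)=0$ for $i>g$, and conclude by cofinality. The only difference is that where you invoke the Buchsbaum--Eisenbud criterion directly on the base-changed complex $F_\bullet^{(k)}$, the paper simply cites \cite[Theorem~3.5]{BV}, whose proof is precisely that argument. The point you flag as delicate is resolved by the following observation: for any prime $\mathfrak p\not\supseteq\aa$ we have $(R/\aa)_{\mathfrak p}=0$, so $(F_\bullet)_{\mathfrak p}$ is split exact and hence $I_{r_i}(\varphi_i)_{\mathfrak p}=R_{\mathfrak p}$; thus $V(I_{r_i}(\varphi_i))\subseteq V(\aa)$, i.e.\ $\aa^{N}\subseteq I_{r_i}(\varphi_i)$ for some $N$. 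Applying $\phi_k$ gives $\phi_k(\aa)^{N}R\subseteq I_{r_i}(\phi_k(\varphi_i))R$, whence $\grade\bigl(I_{r_i}(\phi_k(\varphi_i))R\bigr)\geq\grade(\phi_k(\aa)R)=g\geq i$ for every $i\leq g$, and the acyclicity criterion applies.
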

\begin{proof}
Set $\grade(\aa)=g$. Since for all $i$ there exists $j$ such that $\aa^j\subseteq \phi_i(\aa)R \subseteq \aa$, we have $\grade (\phi_k(\aa)R)=g$ for all $k\in\NN$. So $\phi_k(\aa)R$ is a perfect ideal for all $k\in\NN$ by \cite[Theorem 3.5]{BV}. Therefore 
\[\Ext_R^i(R/\phi_k(\aa)R,R)=0 \ \ \forall \ i>g, \ k\in\NN.\] 
We infer that $\cd(R,\aa)\leq g$ by the identity 
\[H_{\aa}^i(R)\cong \varinjlim \Ext^i(R/\phi_k(\aa),R).\]
We conclude because $\cd(R,\aa)\geq\height(\aa)\geq \grade(\aa)$.
\end{proof}

The first consequence of Lemma \ref{generalthm} is the promised extension of \cite[Proposition 4.1]{PS}.

\begin{cor}\label{corcharp}
Let $R$ be a Noetherian ring of positive characteristic. If $\aa\subseteq R$ is a perfect ideal, then $\cd(R,\aa)=\height(\aa)$.
\end{cor}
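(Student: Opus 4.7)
The plan is to apply Lemma \ref{generalthm} with $\phi_k$ taken to be the $k$-th iterate of the Frobenius endomorphism. Let $p$ denote the characteristic of $R$ and let $\phi : R \to R$ be the Frobenius, $r \mapsto r^p$, which is a ring homomorphism because $\chara(R) = p > 0$. Set $\phi_k = \phi^k$, so that $\phi_k(r) = r^{p^k}$. Condition (1) of the lemma is immediate since $\phi_0$ is the identity.

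For condition (2), fix $i \geq j$ and $a \in \aa$. Since $\aa$ is an ideal, $a^{p^{i-j}} \in \aa$, and then $\phi_i(a) = a^{p^i} = (a^{p^{i-j}})^{p^j} = \phi_j(a^{p^{i-j}}) \in \phi_j(\aa)$, as required.

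For condition (3), note that if $\aa = (a_1, \ldots, a_m)$ then $\phi_k(\aa) R$ is the Frobenius power $\aa^{[p^k]} = (a_1^{p^k}, \ldots, a_m^{p^k})$. On one hand $\aa^{[p^k]} \subseteq \aa^{p^k}$, which gives one direction of cofinality. On the other hand, any monomial $a_1^{c_1} \cdots a_m^{c_m}$ with $c_1 + \cdots + c_m \geq m(p^k - 1) + 1$ satisfies $c_\ell \geq p^k$ for some $\ell$ by the pigeonhole principle, hence lies in $\aa^{[p^k]}$; thus $\aa^{m(p^k-1)+1} \subseteq \aa^{[p^k]}$, giving the other direction. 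So the systems $\{\phi_k(\aa)R\}$ and $\{\aa^k\}$ are cofinal.

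All three hypotheses of Lemma \ref{generalthm} are verified, so we conclude $\cd(R,\aa) = \height(\aa)$. No real obstacle arises here: the only content is the observation, already central to Peskine--Szpiro's original argument, that the Frobenius provides the required compatible system; the lemma has absorbed the role that flatness of Frobenius played in \cite{PS}, replacing it by the Buchsbaum--Eisenbud acyclicity criterion to conclude perfectness of $\phi_k(\aa)R$.
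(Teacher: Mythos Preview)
Your proof is correct and follows exactly the approach of the paper, which simply says to apply Lemma \ref{generalthm} with $\phi_k$ the $k$th iterate of the Frobenius; you have just filled in the routine verification of conditions (1)--(3) that the paper leaves implicit. Your closing remark about the Buchsbaum--Eisenbud criterion replacing flatness of Frobenius is also accurate and matches the paper's commentary preceding the corollary.
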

\begin{proof}
It follows from Lemma \ref{generalthm} considering the $k$th iteration of the Frobenius map as $\phi_k$.
\end{proof}

Of course, the converse of the above corollary does not hold, since the cohomological dimension of $\aa$ is an invariant of the radical of $\aa$. 

\begin{es}
Notice that Corollary \ref{corcharp} does not hold just assuming $R/\aa$ Cohen-Macaulay: For instance, let $R=\kk[x,y]/(xy)$ and $\aa =(x)\subseteq R$. Note that $R/\aa\cong \kk[y]$ is Cohen-Macaulay, $\height(\aa)=0$ and
\[\ldots \xrightarrow{\cdot y} R \xrightarrow{\cdot x} R \xrightarrow{\cdot y} R \xrightarrow{\cdot x} \aa \rightarrow 0\]
is an infinite minimal free resolution of $\aa$. Considering the homomorphism $R\rightarrow \kk[x]$ mapping $x$ to itself and $y$ to zero and viewing $M=\kk[x]$ as an $R$-module, one has $H_{\aa}^1(M)\cong H_{(x)\kk[x]}^1(\kk[x])\neq 0$. In particular, $\cd(R,\aa)=1>0=\height(\aa)$.
\end{es}

We notice other two consequences of Lemma \ref{generalthm}.

\begin{cor}
Let $A$ be a Noetherian ring and $\Gamma$ a finitely generated commutative monoid. If $\aa\subseteq R=A[\Gamma]$ is a perfect ideal generated by a subset of $\Gamma$, then $\cd(R,\aa)=\height(\aa)$. 
\end{cor}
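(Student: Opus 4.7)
The plan is to apply Lemma~\ref{generalthm} with endomorphisms $\phi_k$ coming from the commutative monoid structure of $\Gamma$, in a way that parallels the Frobenius approach of Corollary~\ref{corcharp}. Since $\Gamma$ is finitely generated and $A$ is Noetherian, $R=A[\Gamma]$ is Noetherian, so $\aa$ is generated by finitely many elements $g_1,\ldots ,g_r\in\Gamma$.

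For each $k\in\NN$, I would let $\phi_k\colon R\to R$ be the $A$-algebra endomorphism induced by the monoid map $\Gamma\to\Gamma$, $\gamma\mapsto\gamma^{k+1}$; this is a well-defined monoid homomorphism because $\Gamma$ is commutative, and so extends uniquely to a ring endomorphism fixing $A$. Clearly $\phi_0=1_R$, verifying condition~(1). The ideal $\phi_k(\aa)R$ equals $(g_1^{k+1},\ldots ,g_r^{k+1})$, and the factorization $g_l^{i+1}=g_l^{i-j}\cdot g_l^{j+1}$ for $i\geq j$ shows $\phi_i(\aa)R\subseteq\phi_j(\aa)R$, giving condition~(2).

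For the cofinality condition~(3), on the one hand $\phi_k(\aa)R=(g_1^{k+1},\ldots ,g_r^{k+1})\subseteq\aa^{k+1}$; on the other, each monomial generator of $\aa^{r(k+1)}$ is a product of $r(k+1)$ of the $g_l$, so by pigeonhole some $g_l$ appears with multiplicity at least $k+1$, whence $\aa^{r(k+1)}\subseteq\phi_k(\aa)R$. Thus $\{\phi_k(\aa)R\}_{k\in\NN}$ is cofinal with $\{\aa^k\}_{k\in\NN}$, and Lemma~\ref{generalthm} yields $\cd(R,\aa)=\height(\aa)$.

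The argument is largely formal; the only point worth flagging is that commutativity of $\Gamma$ is precisely what makes the $(k+1)$th power map a monoid endomorphism, which is what allows this construction to replace the Frobenius iterates used in Corollary~\ref{corcharp}.
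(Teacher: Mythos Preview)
Your proof is correct and takes essentially the same approach as the paper: both apply Lemma~\ref{generalthm} with $\phi_k$ induced by the multiplication-by-$k$ (equivalently, $k$th-power) monoid endomorphisms of $\Gamma$. The paper's proof is a one-liner that omits the verification of conditions (1)--(3), while you spell these out explicitly, including the pigeonhole cofinality argument and the harmless index shift $k\mapsto k+1$ ensuring $\phi_0=1_R$.
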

\begin{proof}
Once again, we want to use Lemma \ref{generalthm}. To this purpose, we look at $\phi_k$ induced by the monoid homomorphisms $\Gamma \xrightarrow{\cdot k}\Gamma$,
which satisfy the assumptions of Lemma \ref{generalthm}.
\end{proof}

\begin{cor}\label{maxmin}
Let $R$ be a Noetherian ring of positive characteristic and $(r_{ij})$ a $m\times n$-matrix with entries in $R$. Let $\aa$ be the ideal generated by the $\min\{m,n\}$-minors of $(r_{ij})$. If $\height(\aa)=|n-m|+1$, then $\cd(\aa)=\height(\aa)$.
\end{cor}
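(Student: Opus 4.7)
The plan is to apply Corollary \ref{corcharp}, so the entire task reduces to showing that $\aa$ is a perfect ideal of $R$. By transposing $(r_{ij})$ if needed -- an operation that preserves both the ideal of $\min\{m,n\}$-minors and its height -- I may assume $m\leq n$, so that $\aa$ is the ideal of maximal minors of the $m\times n$ matrix $(r_{ij})$ and the hypothesis reads $\height(\aa)=n-m+1$.

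To resolve $R/\aa$ by a free complex of the right length I would use the Eagon--Northcott complex $\mathcal{EN}^\bullet(\varphi)$ attached to the $R$-linear map $\varphi\colon R^n\to R^m$ encoded by $(r_{ij})$. This is a complex of finite free $R$-modules of length $n-m+1$ whose zeroth homology is $R/\aa$. By the Buchsbaum--Eisenbud acyclicity criterion -- exactly the tool emphasized in the introduction and already exploited in the proof of Lemma \ref{generalthm} -- the complex $\mathcal{EN}^\bullet(\varphi)$ is acyclic, and hence a free resolution of $R/\aa$, provided $\grade(\aa)\geq n-m+1$. Acyclicity then gives $\pd(R/\aa)\leq n-m+1$; combined with the general inequality $\grade(\aa)\leq \pd(R/\aa)$, one obtains $\grade(\aa)=\pd(R/\aa)=n-m+1$, so $\aa$ is perfect. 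This is essentially the classical perfectness theorem for determinantal ideals of maximal minors, recorded in \cite{BV}.

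The delicate point I expect to spend most effort on is verifying the grade lower bound $\grade(\aa)\geq n-m+1$ required by Buchsbaum--Eisenbud, because in a general Noetherian ring only $\grade(\aa)\leq\height(\aa)$ holds automatically. Here the height hypothesis, combined with the standard upper bound $\grade(\aa)\leq n-m+1$ available for ideals of maximal minors of an $m\times n$ matrix, should, after localizing at minimal primes of $\aa$, produce the desired equality. Once perfectness is in hand, the positive-characteristic hypothesis on $R$ triggers Corollary \ref{corcharp} and delivers $\cd(R,\aa)=\height(\aa)$ immediately.
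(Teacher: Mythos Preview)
Your strategy --- show $\aa$ is perfect via the Eagon--Northcott complex and then invoke Corollary~\ref{corcharp} --- is exactly the paper's; its entire proof is the line ``Such an ideal $\aa$ is resolved by the Eagon--Northcott complex, so it is perfect. Therefore the conclusion follows by Corollary~\ref{corcharp}.'' But you have correctly spotted a subtlety the paper does not address: acyclicity of the Eagon--Northcott complex requires $\grade(\aa)\geq n-m+1$, and over an arbitrary Noetherian ring the hypothesis $\height(\aa)=n-m+1$ does not force this. Your proposed fix cannot work, because every ingredient you list is an \emph{upper} bound on the grade, and localizing at a minimal prime $\mathfrak p$ of $\aa$ only yields $\grade(\aa R_{\mathfrak p})=\depth R_{\mathfrak p}$, which need not equal $\height(\mathfrak p)$ unless $R_{\mathfrak p}$ is Cohen--Macaulay. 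For a concrete failure, take $\kk$ of positive characteristic, $R=\kk[[x,y,z]]/(xz,yz,z^{2})$ and the $1\times 2$ matrix $(x\ \ y)$: then $\aa=(x,y)$ has $\sqrt{\aa}=\mm$, so $\height(\aa)=\dim R=2=|2-1|+1$, yet $\grade(\aa)=\depth R=0$ because $z$ is a nonzero socle element; thus $\aa$ is not perfect and the Eagon--Northcott (here Koszul) complex on $x,y$ is not acyclic.

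So the gap you identified is real and your localization argument does not close it; the paper's one-line proof simply glides over the same point. The clean hypothesis --- and the one under which the perfection result is recorded in \cite{BV} --- is $\grade(\aa)=|n-m|+1$: with that, the Eagon--Northcott complex is a free resolution, $\aa$ is perfect, and Corollary~\ref{corcharp} applies immediately. Height and grade agree whenever $R$ is Cohen--Macaulay, which is presumably the intended setting.
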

\begin{proof}
Such an ideal $\aa$ is resolved by the Eagon-Northcott complex, so it is perfect. Therefore the conclusion follows by Corollary \ref{corcharp}.
\end{proof}

%

Corollary \ref{maxmin} (and therefore also \ref{corcharp}) does not hold in characteristic $0$, as shown by the following example:

\begin{es}\label{exdet}
Let $\kk$ be a field of characteristic $0$, $(x_{ij})$ a $m\times n$-matrix of indeterminates over $\kk$ and $R=\kk[x_{ij}]$. If $\aa\subseteq R$ is the ideal generated by the $t$-minors of $(x_{ij})$, for $t\leq \min\{m,n\}$, then $\aa$ is a perfect ideal of $\height(\aa)=(m-t+1)(n-t+1)$. However, by \cite{BS}, we have $\cd(R,\aa)=mn-t^2+1$. Therefore,
\[\cd(R,\aa)-\height(\aa)=(m+n-2t)(t-1),\]
that, unless $m=n=t$ or $t=1$, is a positive integer.
\end{es}

\section{Characteristic $0$}

Notice that in Example \ref{exdet} we have $\dim R/\aa = (t-1)(m+n-t+1)$. As one can check, $d=\dim R/\aa$ can be any natural number different from $1$ and $2$. Furthermore, if $d\in\{0,3\}$, then we are forced to be in the special instances in which $\cd(R,\aa)=\height(\aa)$. However, if $d\geq 4$, numbers for which $\cd(R,\aa)>\height(\aa)$ can always be chosen (for example, $t=m=2$ and $n=d-1$, the case in which $\aa$ defines the Segre product $\PP^1\times \PP^{d-2}$ inside $\PP^{2d-3}$). In this section, we try to understand what happens in the remaining cases. Slightly more generally, we wonder if we can deduce $\cd(R,\aa)\leq n-t$ knowing that $\depth(R/\aa)\geq t$, where $t\leq 3$. If $t=0$, then the vanishing theorem of Grothendieck implies $\cd(R/\aa)\leq n$. If $t=1$, then one can show $\cd(R,\aa)\leq n-1$ using the theorem of Hartshorne and Lichtembaum \cite[Theorem 3.1]{Ha}. When $t=2$, one can show that $\cd(R,\aa)\leq n-2$ provided that $R$ is a regular local ring containing a field, exploiting a result of Ogus \cite[Corollary 2.11]{Og} and one of Hartshorne \cite[Proposition 2.1]{hartshorne}. We will point out the proof of this last case.

\begin{prop}\label{dimension 2}
Let $(R,\mm)$ be an $n$-dimensional regular local ring containing a field and $\aa\subseteq R$ an ideal. If $\depth(R/\aa)\geq 2$, then $\cd(R,\aa)\leq n-2$.
\end{prop}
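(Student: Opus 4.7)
The plan is to deduce the proposition from the Second Vanishing Theorem of Ogus \cite[Corollary 2.11]{Og} (with a characteristic-free refinement available via \cite{HL}), which states that when $R$ is a regular local ring containing a field, one has $\cd(R,\aa)\leq n-2$ if and only if $\dim(\widehat{R}/\aa\widehat{R})\geq 2$ and the punctured spectrum $\Spec(\widehat{R}/\aa\widehat{R})\setminus\{\mm\widehat{R}\}$ is connected. So the task reduces to verifying these two conditions under the hypothesis $\depth(R/\aa)\geq 2$.

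For the dimension condition, since depth is bounded above by Krull dimension, $\dim(R/\aa)\geq \depth(R/\aa)\geq 2$, and completion preserves dimension, so $\dim(\widehat{R}/\aa\widehat{R})\geq 2$. For the connectedness condition, I would use that $\mm$-adic completion also preserves depth, giving $\depth(\widehat{R}/\aa\widehat{R})\geq 2$; then Hartshorne's connectedness theorem \cite[Proposition 2.1]{hartshorne} — to the effect that a Noetherian local ring of depth at least $2$ has connected punctured spectrum — applied to $\widehat{R}/\aa\widehat{R}$ yields exactly the required connectedness.

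The argument itself is a short piece of bookkeeping; the real content is packed into the two cited results. The subtlety I would be careful about is that the Second Vanishing Theorem demands connectedness of the \emph{formal} punctured spectrum, not just of $\Spec(R/\aa)\setminus\{\mm\}$. This is why the depth criterion of Hartshorne must be applied after completing, and it is crucial that both the depth and the dimension of $R/\aa$ are preserved under the passage to $\widehat{R}/\aa\widehat{R}$.
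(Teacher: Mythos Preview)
Your approach is essentially the same as the paper's: reduce to the completion, invoke Hartshorne's connectedness criterion \cite[Proposition~2.1]{hartshorne} from $\depth\geq 2$, and then apply the Second Vanishing Theorem \cite[Corollary~2.11]{Og}/\cite{HL}. One small point of care: as stated in both \cite{Og} and \cite{HL}, the Second Vanishing Theorem carries a hypothesis on the residue field (Ogus works over an algebraically closed field of characteristic~$0$; Huneke--Lyubeznik require a separably closed coefficient field), so before completing one should first pass to a faithfully flat local extension with such a residue field---this is the \emph{gonflement} step the paper performs explicitly. Since both $\depth(R/\aa)$ and $\cd(R,\aa)$ are preserved under that extension, your argument goes through unchanged once this reduction is inserted.
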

\begin{proof}
Suppose $\kk=R/\mm$ where $\mm$ is the maximal ideal of $R$. If $\chara(\kk)>0$ we already know the result, so we can assume $\chara(\kk)=0$. Take a faithfully flat homomorphism from $(R,\mm)$ to a regular local ring $(S,\nn)$ such that $S/\nn$ is the algebraic closure of $\kk$ (such a thing exists, it is a suitable {\it gonflement} of $R$, see \cite[Chapter IX, Appendice 2.]{Bou}). Faithfully flatness guarantees that $S$ still contains a field, $\depth(R/\aa)=\depth(S/\aa S)$ and $\cd(R,\aa)=\cd(S,\aa S)$. Therefore we can assume that $\kk$ is algebraically closed. 
Again since $\widehat{R}$ is faithfully flat over $R$, we have $\cd(\widehat{R},\aa \widehat{R})=\cd(R,\aa)$ and $\depth(\widehat{R}/\aa\widehat{R})=\depth(R/\aa)$. Thus it is harmless to assume that $R$ is complete, so that $R\cong \kk[[x_1,\ldots ,x_n]]$ by the Cohen structure theorem. By \cite[Proposition 2.1]{hartshorne} $\Spec(R/\aa)\setminus \{\mm/\aa\}$ is connected. So \cite[Corollary 2.11]{Og} yields the conclusion.
\end{proof}
%

For what said till now, it remains to understand the case in which $t=3$: If $\aa$ is an ideal of a regular local ring such that $\depth(R/\aa)\geq 3$, is it true that $\cd(R,\aa)\leq n-3$?

\begin{prop}\label{lyuthm}
Let $(R,\mm)$ be an $n$-dimensional regular local ring containing a field and $\aa\subseteq R$ an ideal. If $\depth(R/\aa)=k$, then $\dim(\Supp(H_{\aa}^{n-i}(R))\leq i-2$ for all $0\leq i<k$.
\end{prop}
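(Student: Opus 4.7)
The plan is to show, for each $0 \leq i < k$ and each prime $\mathfrak{p}$ of $R$ with $\dim(R/\mathfrak{p}) \geq i - 1$, that $H_\aa^{n-i}(R)_\mathfrak{p} = 0$; this is equivalent to the desired bound $\dim \Supp(H_\aa^{n-i}(R)) \leq i - 2$. The strategy is to transfer the global depth hypothesis on $R/\aa$ to a local one on $(R/\aa)_\mathfrak{p}$ via the Auslander--Buchsbaum formula, and then invoke the vanishing theorems already collected in the paper (Grothendieck, Lichtenbaum--Hartshorne, Proposition \ref{dimension 2}) on the regular local rings $R_\mathfrak{p}$.

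The key ingredient is the depth-localization inequality: for every prime $\mathfrak{p}$ of $R$, setting $d := \dim(R/\mathfrak{p})$,
\[\depth((R/\aa)_\mathfrak{p}) \geq k - d.\]
Since $R$ is regular, $R/\aa$ has finite projective dimension, and Auslander--Buchsbaum applied to $R/\aa$ over $R$ and to $(R/\aa)_\mathfrak{p}$ over $R_\mathfrak{p}$ gives $\pd_R(R/\aa) = n - k$ and $\pd_{R_\mathfrak{p}}((R/\aa)_\mathfrak{p}) = (n - d) - \depth((R/\aa)_\mathfrak{p})$ respectively; combining these with the standard inequality $\pd_{R_\mathfrak{p}}((R/\aa)_\mathfrak{p}) \leq \pd_R(R/\aa)$ yields the bound.

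Now fix $\mathfrak{p} \supseteq \aa$ with $d \geq i - 1$ (if $\mathfrak{p} \not\supseteq \aa$ the localization is trivially zero). Using $H_\aa^{n-i}(R)_\mathfrak{p} \cong H_{\aa R_\mathfrak{p}}^{n-i}(R_\mathfrak{p})$, one works on the regular local ring $R_\mathfrak{p}$ of dimension $n - d$, which still contains a field. Three cases cover the condition $d \geq i - 1$. If $d \geq i + 1$, then $n - i > \dim R_\mathfrak{p}$ and Grothendieck vanishing applies. If $d = i$, the depth bound gives $\depth((R/\aa)_\mathfrak{p}) \geq k - i \geq 1$, so $\dim(R_\mathfrak{p}/\aa R_\mathfrak{p}) \geq 1$, and Lichtenbaum--Hartshorne (applied after passing to the completion, which remains a regular local domain) kills the top local cohomology of $R_\mathfrak{p}$. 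If $d = i - 1$, the depth bound gives $\depth((R/\aa)_\mathfrak{p}) \geq k - i + 1 \geq 2$, and Proposition \ref{dimension 2} applied to $R_\mathfrak{p}$ gives $\cd(R_\mathfrak{p}, \aa R_\mathfrak{p}) \leq (n - i + 1) - 2 = n - i - 1 < n - i$, so again the relevant cohomology vanishes.

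I do not expect a serious obstacle once the depth-localization inequality is in hand: everything after that is a clean case analysis against the hierarchy of vanishing results listed just before the statement. The regularity of $R$ is essential twice over, first to invoke Auslander--Buchsbaum for $R/\aa$ and its localizations, and second to ensure that every $R_\mathfrak{p}$ remains a regular local ring containing a field, so that Proposition \ref{dimension 2} can be reapplied at arbitrary localizations.
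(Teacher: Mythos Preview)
Your proof is correct and follows essentially the same architecture as the paper's: reduce to showing $H_{\aa R_{\mathfrak{p}}}^{n-i}(R_{\mathfrak{p}})=0$ for primes $\mathfrak{p}$ with $\height(\mathfrak{p})\leq n-i+1$, split into the three height ranges, and apply respectively Grothendieck vanishing, Hartshorne--Lichtenbaum, and Proposition~\ref{dimension 2}. The only difference is the device used to obtain the local depth bound $\depth((R/\aa)_{\mathfrak{p}})\geq k-\dim(R/\mathfrak{p})$: you derive it uniformly from Auslander--Buchsbaum (using regularity of $R$ to guarantee finite projective dimension), whereas the paper handles the case $\height(\mathfrak{p})=n-i$ by observing that $\mathfrak{p}$ cannot be an associated prime of $R/\aa$ and the case $\height(\mathfrak{p})=n-i+1$ via Ischebeck's theorem. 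Your route is slightly more streamlined since one inequality covers both cases; the paper's route has the mild advantage that Ischebeck does not require finite projective dimension, though regularity is needed anyway for Proposition~\ref{dimension 2}.
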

\begin{proof}
Given $i<k$, we have to show that $(H_{\aa}^{n-i}(R))_{\mathfrak{p}}=H_{\aa R_{\mathfrak{p}}}^{n-i}(R_{\mathfrak{p}})=0$ for all $\mathfrak{p}\in\Spec R$ such that $\height(\mathfrak{p})\leq n-i+1$. Let us denote by $h$ the height of $\mathfrak{p}$. We can suppose that $h\geq n-i$, because otherwise $H_{\aa R_{\mathfrak{p}}}^{n-i}(R_{\mathfrak{p}})$ is automatically $0$ (since $\dim R_{\mathfrak{p}}=h<n-i$).

First let us assume that $h=n-i$. Since $i<k=\depth(R/\aa)$, $\mathfrak{p}$ is not a minimal prime of $R/\aa$. This implies that $\dim R_{\mathfrak{p}}/\aa R_{\mathfrak{p}} > 0$, which, using the Hartshorne-Lichtembaum theorem, yields $H_{\aa R_{\mathfrak{p}}}^{n-i}(R_{\mathfrak{p}})=0$.

So we can suppose $h=n-i+1$. A theorem of Ischebeck (\cite[Theorem 17.1]{matsu}) yields
\[\Ext_{R}^0(R/\mathfrak{p} ,R/\aa)=\Ext_{R}^1(R/\mathfrak{p} ,R/\aa)=0.\]
This means that $\grade(\mathfrak{p} ,R/\aa) > 1$ and so that $H_{\mathfrak{p}}^0(R/\aa)=H_{\mathfrak{p}}^1(R/\aa)=0$. In particular
\[H_{\mathfrak{p} R_{\mathfrak{p}}}^0(R_{\mathfrak{p}}/\aa R_{\mathfrak{p}})=H_{\mathfrak{p} R_{\mathfrak{p}}}^1(R_{\mathfrak{p}}/\aa R_{\mathfrak{p}})=0,\]
that is $\depth(R_{\mathfrak{p}}/\aa R_{\mathfrak{p}})\geq 2$. Since $R_{\mathfrak{p}}$ is an $(n-i+1)$-dimensional regular local ring, Proposition \ref{dimension 2} yields
\[H_{\aa R_{\mathfrak{p}}}^{n-i}(R_{\mathfrak{p}})=0.\]
This concludes the proof.
\end{proof}

A first consequence of Proposition \ref{lyuthm}, concerns a fact on the Lyubeznik numbers of a local ring. We recall the definition: Let $A$ be a local ring which admits a surjection from an $n$-dimensional regular local ring containing a field. Let $\aa$ be the kernel of the surjection, and $\kk=R/\mm$. In \cite{Ly2}, Lyubeznik proved that the Bass numbers $\lambda_{i,j}(A)=\dim_{\kk} \Ext_R^i(\kk ,H_{\aa}^{n-j}(R))$ depend only on $A$, $i$ and $j$, but neither on $R$ nor on the surjection $R\rightarrow A$. For this reason they are usually called the Lyubeznik numbers of $A$. Furthermore they can been defined for any local ring containing a field, if needed passing to the completion. In \cite{Ly2} it was also showed that $\lambda_{i,j}(A)=0$ whenever $j>d=\dim A$ or $i>j$ and that $\lambda_{d,d}(A)\neq 0$.

\begin{cor}\label{lyucor}
Let $A$ be a local ring containing a field. If $\depth(A)=k$, then $\lambda_{i-1,i}(A)=\lambda_{i,i}(A)=0$ for all $0\leq i<k$.
\end{cor}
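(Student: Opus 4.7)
\emph{Proof plan.} The plan is to reduce the claim to a Bass-number vanishing and then combine Proposition \ref{lyuthm} with Lyubeznik's bound on the injective dimension of local cohomology modules of a regular ring. First I would pass to the completion of $A$, which preserves both the depth and the $\lambda_{i,j}$, and present $A=R/\aa$ with $(R,\mm)$ a complete regular local ring of dimension $n$ containing a field and residue field $\kk$. By the very definition of Lyubeznik numbers one has
\[\lambda_{i,j}(A)=\dim_{\kk}\Ext_R^i(\kk,H_{\aa}^{n-j}(R))=\mu^i(\mm,H_{\aa}^{n-j}(R)),\]
where $\mu^i(\mm,-)$ is the $i$-th Bass number at $\mm$; the second equality uses that $\Hom_R(\kk,E_R(R/\mathfrak{p}))$ is $0$ for $\mathfrak{p}\neq\mm$ and equals $\kk$ for $\mathfrak{p}=\mm$, so applying $\Hom_R(\kk,-)$ to a minimal injective resolution kills every differential. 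The task therefore becomes to prove that $\mu^{i-1}(\mm,H_{\aa}^{n-i}(R))=\mu^i(\mm,H_{\aa}^{n-i}(R))=0$ for every $0\leq i<k$.

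\emph{Key step.} Proposition \ref{lyuthm} hands over, in the range $0\leq i<k$, the estimate $\dim\Supp H_{\aa}^{n-i}(R)\leq i-2$. I would then invoke Lyubeznik's theorem from \cite{Ly2} (which in positive characteristic goes back to Huneke and Sharp): for any local cohomology module $H_{\aa}^p(R)$ of a regular local ring containing a field one has
\[\id_R H_{\aa}^p(R)\leq\dim\Supp H_{\aa}^p(R).\]
Applied with $p=n-i$ this gives $\id_R H_{\aa}^{n-i}(R)\leq i-2$, whence every Bass number $\mu^j(\mm,H_{\aa}^{n-i}(R))$ with $j>i-2$ vanishes. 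Setting $j=i-1$ and $j=i$ yields exactly $\lambda_{i-1,i}(A)=\lambda_{i,i}(A)=0$, as required.

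\emph{Main obstacle.} The substantive input is already absorbed into Proposition \ref{lyuthm} and into the inequality $\id\leq\dim\Supp$ for local cohomology of regular rings; once these are granted, what remains is essentially bookkeeping. The only thing that needs a brief verification is the reduction to the complete local case, but both $\depth$ and the Lyubeznik numbers are known to be preserved under faithfully flat completion, so no real difficulty arises. Thus the corollary should fall out immediately from Proposition \ref{lyuthm} once it is paired with the injective-dimension bound.
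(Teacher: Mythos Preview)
Your proof is correct and follows exactly the paper's approach: combine Proposition \ref{lyuthm} with Lyubeznik's bound $\id_R H_{\aa}^{p}(R)\leq\dim\Supp H_{\aa}^{p}(R)$ to force the relevant Bass numbers to vanish. The paper's proof is a one-liner citing these two inputs; you have simply unpacked the reduction to the complete case and the Bass-number interpretation, neither of which adds anything new.
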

\begin{proof}
By \cite[Corollary 3.6]{Ly2} the injective dimension of $H_{\aa}^i(R)$ is bounded above by the dimension of the support of $H_{\aa}^i(R)$, so the statement follows by Proposition \ref{lyuthm}.
\end{proof}

We do not know whether local rings as in Corollary \ref{lyucor} satisfy $\lambda_{i-2,i}(A)=0$ for all $i<k$. Actually this is related to the question we are investigating on. However, we can provide an example for which $k>3$ and $\lambda_{0,3}(A)\neq 0$.

\begin{es}
Let $I$ be the homogeneous ideal of $S=\kk[x_{pq}:p=0,\ldots ,r \ \ q=0,\ldots ,s]$ defining the Segre product $\PP^r\times \PP^s\subseteq \PP^{rs+r+s}$, where $\kk$ is a field of characteristic $0$ and $r>s\geq 1$. Let $\mm$ be the maximal irrelevant of $S$, $R=S_{\mm}$, $\aa =IR$ and $A=R/\aa$. We know from Example \ref{exdet} that, if $n=rs+r+s+1$, then $H_{\aa}^{n-3}(R)\neq 0$. Moreover, if $\mathfrak{p}\in\Spec R$ is not maximal, then $H_{\aa}^{n-3}(R)_{\mathfrak{p}}=0$: In fact, since $\PP^r\times \PP^s$ is smooth, $\aa R_{\mathfrak{p}}\subseteq R$ is generated by a regular sequence of length $n-r-s-1<n-3$. Therefore $\dim(\Supp(H_{\aa}^{n-3}(R))=0$, so $H_{\aa}^{n-3}(R)$ is an injective $R$-module by \cite{Ly2}. Because it is supported at $\mm$, we have $H_{\aa}^{n-3}(R)\cong E^s$ for some $s>0$, where $E$ is the injective hull of $\kk$ (as an $R$-module). Eventually, $\Hom_R(\kk,E^s)\cong\kk^s$, that implies $\lambda_{0,3}(A)\neq 0$.
\end{es}

Before stating the main result of the paper, let us introduce some notation: Let $X$ be a projective scheme over a field $\kk$ of characteristic $0$. By $H_{DR}^i(X)$ we mean algebraic de Rham cohomology, as defined in \cite{hartder}. If $\kk=\CC$, we denote by $X_h$ the analytic space associated to $X$. By \cite[Chapter IV, Theorem 1.1]{hartder}, we have $H^i(X_h,\CC)\cong H_{DR}^i(X)$, where by $H^i(X_h,\CC)$ we mean singular cohomology with coefficients in $\CC$. 

\begin{thm}\label{mainthm}
Let $\kk$ be a field of any characteristic and $R=\kk[x_1,\ldots ,x_n]$. If $\aa\subseteq R$ is a homogeneous ideal such that $\depth(R/\aa)\geq 3$, then $\cd(R,\aa)\leq n-3$. In particular, if $R/\aa$ is a $3$-dimensional Cohen-Macaulay ring, then $\cd(R,\aa)=n-3$.
\end{thm}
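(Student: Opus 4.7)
The plan is to reduce to the case $\kk = \CC$, use Proposition~\ref{lyuthm} to isolate the single remaining possibly nonvanishing module $H_\aa^{n-2}(R)$, and then apply Hodge theory on the projective scheme $Y = \Proj(R/\aa) \subseteq \PP^{n-1}$ to kill it.

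First I would dispose of positive characteristic via Peskine--Szpiro (recalled in the abstract). In characteristic zero, both $\depth$ and $\cd$ are invariant under faithfully flat base change of the ground field, so using the Lefschetz principle (only finitely many generators and syzygies of $\aa$ are involved) I may assume $\kk = \CC$. Applying Proposition~\ref{lyuthm} with $k = 3$ at the irrelevant maximal ideal $\mm$, and using that in the graded setting the support of a graded module is detected by homogeneous primes, I obtain $H_\aa^i(R) = 0$ for $i \geq n-1$ and $\Supp H_\aa^{n-2}(R) \subseteq \{\mm\}$. It remains only to show $H_\aa^{n-2}(R) = 0$.

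Next I would translate this into a topological condition on $Y$. By the Serre-Grothendieck isomorphism $H_\mm^i(R/\aa) \cong \bigoplus_{j \in \ZZ} H^{i-1}(Y,\O_Y(j))$ for $i \geq 2$, the depth hypothesis yields $H^1(Y,\O_Y) = 0$. On the other hand, via comparison theorems of Ogus~\cite{Og} (and Hartshorne--Speiser~\cite{HS}, Lyubeznik~\cite{Ly1}) relating cohomological dimension to algebraic de Rham or singular cohomology of the complement $\PP^{n-1}\setminus Y$, the vanishing of $H_\aa^{n-2}(R)$ becomes equivalent to $H^1(Y_h,\CC) = 0$, the natural next step after the connectivity of $Y_h$ already used in Proposition~\ref{dimension 2}. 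The passage from the coherent vanishing $H^1(Y,\O_Y) = 0$ to the topological vanishing $H^1(Y_h,\CC) = 0$ is where Hodge theory enters: for smooth projective $Y$ this is immediate from the Hodge decomposition and Hodge symmetry, and in general I would invoke Deligne's mixed Hodge theory on the proper $\CC$-scheme $Y$, which places $\operatorname{Gr}_F^0 H^1(Y_h,\CC)$ in a subquotient of $H^1(Y,\O_Y) = 0$ and provides dual control of $\operatorname{Gr}_F^1$ by conjugation.

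The main obstacle is precisely this last step in the singular or nonreduced case, where one has to control the low-weight pieces of the mixed Hodge structure on $H^1(Y_h,\CC)$. A robust way to proceed is via a resolution of singularities $\pi:\tilde Y\to Y$ (available in characteristic zero), using the Leray spectral sequence to compare $H^\bullet(Y_h,\CC)$ with $H^\bullet(\tilde Y_h,\CC)$, and Grauert--Riemenschneider together with the depth hypothesis to control $R^1\pi_*\O_{\tilde Y}$. Verifying that $\depth(R/\aa)\geq 3$ cleanly transfers through the resolution so that the smooth case of Hodge theory closes the argument is where the technical core of the proof will sit; this is presumably the point at which the ideas of Srinivas acknowledged in the introduction enter.
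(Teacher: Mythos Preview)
Your outline matches the paper's proof exactly up to the crucial final step: reduce to $\kk=\CC$, use Proposition~\ref{lyuthm} to get $H_\aa^i(R)=0$ for $i\geq n-1$ and $\Supp H_\aa^{n-2}(R)\subseteq\{\mm\}$, invoke Ogus' criterion \cite[Theorem 4.4]{Og} so that (together with connectedness) the only remaining condition is $H_{DR}^1(X)=H^1(X_h,\CC)=0$, and note that $H^1(X,\O_X)\cong H^2_\mm(R/\aa)_0=0$ from the depth hypothesis.

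Where you diverge from the paper is precisely the step you yourself flag as the ``technical core'': passing from $H^1(X,\O_X)=0$ to $H^1(X_h,\CC)=0$. Your mixed Hodge theory claim that $\mathrm{Gr}_F^0 H^1(X_h,\CC)$ sits inside a subquotient of $H^1(X,\O_X)$ is not correct for an arbitrary singular or nonreduced projective scheme: the Hodge filtration is computed via the Du Bois complex, and $\mathrm{Gr}_F^0 H^1\cong \mathbb{H}^1(X,\underline{\Omega}^0_X)$, which is only identified with $H^1(X,\O_X)$ when $X$ has Du Bois singularities. Your fallback via resolution of singularities, Leray, and Grauert--Riemenschneider is plausible in spirit but you do not actually carry it out, and there is no obvious mechanism by which $\depth(R/\aa)\geq 3$ controls $H^1(\tilde X,\O_{\tilde X})$ on a resolution.

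The paper avoids all of this with a short, elementary argument. Consider the maps of sheaves $\ZZ_{X_h}\to\O_{X_h}\to(\O_{X_h})_{\red}$. The exponential sequence on $X_{\red}$ (together with connectedness of $X$) shows that the composition $H^1(X_h,\ZZ)\to H^1(X_h,(\O_{X_h})_{\red})$ is injective; hence already $H^1(X_h,\ZZ)\to H^1(X_h,\O_{X_h})$ is injective. By GAGA the target is $H^1(X,\O_X)=0$, so $H^1(X_h,\ZZ)=0$, and the universal coefficient theorem gives $H^1(X_h,\CC)=0$. No Hodge theory, no resolution, no Grauert--Riemenschneider---and it works uniformly for nonreduced and singular $X$.
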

\begin{proof}
Of course we need to show this only when the characteristic of $\kk$ is $0$, since in positive characteristic it is true by the result of Peskine-Szpiro. The properties in the hypothesis and in the thesis are preserved under flat extensions. So, since $\aa$ is finitely generated, we can assume that $\QQ\subseteq \kk\subseteq \CC$, and eventually that $\kk=\CC$. That $\cd(R,\aa)\leq n-3$ is equivalent to say that $H^i(\PP^{n-1}\setminus X,\F)=0$ for any coherent sheaf $\F$ on $X=\V_+(\aa)$ and for all $i\geq n-3$. This, by \cite[Theorem 4.4]{Og}, is equivalent to say that $H_{DR}^0(X)\cong H_{DR}^0(\PP^{n-1})$, $H_{DR}^1(X)\cong H_{DR}^1(\PP^{n-1})$ and the de Rham depth of $X$ is at least $2$. The last condition is in turn equivalent to $\Supp(H_{\aa}^i(R))\subseteq \{\mm\}$ for all $i\geq n-2$ where $\mm$ is the maximal irrelevant ideal (see the proof of \cite[Theorem 4.1]{Og}); this is true by Proposition \ref{lyuthm}. The condition $H_{DR}^0(X)\cong H_{DR}^0(\PP^{n-1})$ means that $X$ is connected, which is the case because $\depth(R/\aa)\geq 3$, see \cite[Proposition 2.1]{hartshorne}. So we have to show that $H_{DR}^1(X)\cong H_{DR}^1(\PP^{n-1})=0$. Since $H_{DR}^1(X)\cong H^1(X_h,\CC)$, by the universal coefficient theorem it is enough to show that $H^1(X_h,\ZZ)$ is zero. Let us consider the morphisms of sheaves $\ZZ_{X_h}\rightarrow \O_{X_h}\rightarrow (\O_{{X_h}})_{\red}$ (here $\ZZ_{X_h}$ denotes the locally constant sheaf on $X_h$ associated to $\ZZ$). By the exponential sequence we know that the composition 
\[H^1(X_h,\ZZ_{X_h})\rightarrow H^1(X_h,\O_{X_h})\rightarrow H^1(X_h,(\O_{X_h})_{\red})\]
is injective, so $H^1(X_h,\ZZ_{X_h})\rightarrow H^1(X_h,\O_{X_h})$ has to be injective too (actually the exponential sequence makes sense also for non reduced schemes, however we preferred to follow a different route because the lack of references). By \cite{GAGA} $H^1(X_h,\O_{X_h})\cong H^1(X,\O_{X})$. Furthermore $H^1(X,\O_X)\cong H_{\mm}^2(R/\aa)_0$, and the last vector space is zero because $\depth(R/\aa)\geq 3$. So $H^1(X_h,\ZZ)\cong H^1(X_h,\ZZ_{X_h})$ has to be zero.
%
%
\end{proof}

\begin{os}
Under the assumption that $X$ is smooth, where the situation is considerably simpler, we got the conclusion of Theorem \ref{mainthm} in \cite{Va}. 
Even in the smooth case, however, the above proof cannot be repeated to show $\depth(R/\aa)\geq 4\implies \cd(R,\aa)\leq n-4$. The point is that 
\[H^2(X_h,\CC_{X_h})\cong H^2(X_h,\O_{X_h})\oplus H^1(X_h,\Omega^1_{X_h}) \oplus H^0(X_h,\Omega^2_{X_h}),\] 
so, by the presence of the middle term $H^1(X_h,\Omega^1_{X_h})$, $H_{\mm}^3(R/\aa)=0$ does not imply $H^2(X_h,\CC_{X_h})=0$.
\end{os}

\begin{os}\label{setdepth}
In \cite[Theorem 3.3]{SW}, Singh and Walther used characteristic $p$ methods to prove the following. If $E\subseteq \PP^2$ is an elliptic curve defined over $\ZZ$, $\kk$ is a field of characteristic $0$ and $\aa\subseteq R=\kk[x_0,\ldots ,x_5]$ is the defining ideal of $E\times \PP^1$, then $R/\bb$ is not Cohen-Macaulay for all homogeneous ideals $\bb$ with the same radical of $\aa$. Theorem \ref{mainthm} yields immediately a much more general fact: Let $C$ be a smooth curve of genus at least $1$ and $Y$ be any projective scheme, both over a field $\kk$ of characteristic $0$. Let $\aa\subseteq R= \kk[x_1,\ldots ,x_N]$ be the defining ideal of $X=C\times Y\subseteq \PP^{N-1}$. For all homogeneous $\bb\subseteq R$ such that $\sqrt{\bb}=\sqrt{\aa}$, then $\depth(R/\bb)\leq 2$. To show this, we claim that $\cd(R,\aa)\geq N- 2$, so that, since the cohomological dimension is independent on the radical, Theorem \ref{mainthm} would give the conclusion. To this purpose we can assume $\kk=\CC$. GAGA and Hodge decomposition imply that $H^1(C_h,\CC)\neq 0$: So $H^1(X_h,\CC)\neq 0$ by the Kunneth formula, thus $\cd(R,\aa)\geq N-2$ by a result of Hartshorne, see \cite[Theorem 7.4, p. 148]{hartshorne4}. 
\end{os}

The above remark shows how Theorem \ref{mainthm} can be used to produce ideals which are not set-theoretically Cohen-Macaulay. The following Proposition gives further such examples.

\begin{prop}\label{hodge}
Let $\kk$ be a field of characteristic $0$, $\aa\subseteq R=\kk[x_1,\ldots ,x_n]$ a graded ideal and $\mm=(x_1, \ldots ,x_n)$ the maximal irrelevant. Assume that $\bb=\sqrt{\aa}$ is such that $X=\Proj(R/\bb)$ is smooth. Then
\[\dim_{\kk}H_{\mm}^i(R/\aa)_0\geq \dim_{\kk}H_{\mm}^i(R/\bb)_0 \ \ \forall i\geq 0.\]
\end{prop}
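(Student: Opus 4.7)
My plan is to translate the statement into sheaf cohomology on $Y = \Proj(R/\aa)$ and $X = \Proj(R/\bb)$, reduce it to the surjectivity of the pullback $H^q(Y, \O_Y) \to H^q(X, \O_X)$ for every $q \geq 0$, and then obtain that surjectivity by combining GAGA with Hodge theory on the smooth scheme $X$.

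First I would pass to $\kk = \CC$ by a faithfully flat base change, as in the proof of Proposition \ref{dimension 2}: the degrees of graded local cohomology in the lowest degree are preserved, as is the smoothness of $X$. Next I would apply the standard exact sequence
\[0\to H^0_\mm(R/\aa)\to R/\aa \to \bigoplus_{n\in\ZZ}\Gamma(Y,\O_Y(n))\to H^1_\mm(R/\aa)\to 0\]
together with the isomorphisms $H^i_\mm(R/\aa) \cong \bigoplus_{n\in\ZZ} H^{i-1}(Y, \O_Y(n))$ for $i\geq 2$. Taking the degree-zero part (the empty case $Y=\emptyset$ forces $X=\emptyset$ and is trivial) this yields
\[H^0_\mm(R/\aa)_0 = 0, \qquad H^1_\mm(R/\aa)_0 \cong \Gamma(Y,\O_Y)/\kk, \qquad H^i_\mm(R/\aa)_0 \cong H^{i-1}(Y,\O_Y)\ \ (i\geq 2),\]
and the identical formulas with $\bb, X$ in place of $\aa, Y$. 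Since a surjection $\Gamma(Y, \O_Y)\twoheadrightarrow \Gamma(X, \O_X)$ automatically descends to a surjection on the quotients by the scalar copies of $\kk$ (both are generated by the unit section), it is therefore enough to show that the natural map $H^q(Y, \O_Y) \to H^q(X, \O_X)$ induced by $\O_Y \twoheadrightarrow \O_X$ is surjective for every $q\geq 0$.

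For the surjectivity I would work analytically. Because $Y_h$ and $X_h$ share the same underlying topological space, there is a commutative triangle of sheaves
\[\CC_{X_h} = \CC_{Y_h} \longrightarrow \O_{Y_h} \longrightarrow \O_{X_h},\]
whose long composite is the canonical morphism $\CC_{X_h}\to \O_{X_h}$. Passing to cohomology gives
\[H^q(X_h,\CC) \longrightarrow H^q(Y_h,\O_{Y_h}) \longrightarrow H^q(X_h,\O_{X_h}),\]
and the overall composite is the usual map from complex cohomology to coherent $\O_X$-cohomology. Since $X$ is smooth and projective over $\CC$, Hodge theory identifies $H^q(X_h,\O_{X_h})$ with the $(0,q)$-summand in the Hodge decomposition of $H^q(X_h,\CC)$, so this overall composite is surjective. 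Consequently the second arrow is surjective as well, and GAGA identifies it with the algebraic map $H^q(Y, \O_Y)\twoheadrightarrow H^q(X, \O_X)$, completing the argument.

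The heart of the proof, and the only really substantial input, is the Hodge-theoretic surjectivity of $H^q(X_h,\CC)\to H^q(X_h,\O_{X_h})$: this is exactly the place where the smoothness hypothesis on $X$ is used. Everything else is a formal shuffle among the local cohomology of the graded ring, the coherent cohomology of a thickening, and the coherent cohomology of its reduction; the only care required is to make sure the low-degree cases $i=0,1$ of Serre's exact sequence are matched correctly on both sides.
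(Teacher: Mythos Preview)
Your proof is correct and follows essentially the same route as the paper: both translate the degree-zero local cohomology into coherent cohomology of the thickening and its reduction, then use the factorization $\CC_{X_h}\to\O_{Y_h}\to\O_{X_h}$ (the paper writes this as $\CC_{X_h}\to\O_{X_h}\to(\O_{X_h})_{\red}$) together with Hodge decomposition on the smooth projective $X$ to obtain the required surjectivity, and GAGA to return to the algebraic side. The only cosmetic differences are your use of separate letters $Y$ and $X$ for the thickening and its reduction, and that the paper handles $i\leq 1$ directly over $\kk$ before reducing to $\CC$ for $i\geq 2$.
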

\begin{proof}
For $i=0$ it is trivial. By the exact sequences of graded $R$-modules:
\begin{eqnarray*}
0\rightarrow H_{\mm}^0(R/\aa)\rightarrow R/\aa \rightarrow \bigoplus_{i\in\NN}H^0(X,\O_X(i))\rightarrow H_{\mm}^1(R/\aa)\rightarrow 0 \\
0\rightarrow H_{\mm}^0(R/\bb)\rightarrow R/\bb \rightarrow \bigoplus_{i\in\NN}H^0(X,(\O_X)_{\red}(i))\rightarrow H_{\mm}^1(R/\bb)\rightarrow 0
\end{eqnarray*}
the dimension of the $\kk$-vector spaces $H_{\mm}^1(R/\aa)_0$ and $H_{\mm}^1(R/\bb)_0$ are, respectively, $\dim_{\kk}H^0(X,\O_X)-1$ and $\dim_{\kk}H^0(X,(\O_{X})_{\red})-1$, and obviously $\dim_{\kk}H^0(X,\O_{X})\geq\dim_{\kk}H^0(X,(\O_{X})_{\red})$. Note that so far we did not use the smoothness of $X_{\red}$. For $i\geq 2$ we have to. As usual, it is harmless to assume $\kk=\CC$. Let us remind the isomorphisms of $\CC$-vector spaces $H_{\mm}^i(R/\aa)_0\cong H^{i-1}(X,\O_X)$ and $H_{\mm}^i(R/\bb)_0\cong H^{i-1}(X,(\O_X)_{\red})$. Let us consider the natural maps of sheaves
\[\CC_{X_h}\rightarrow \O_{X_h} \rightarrow (\O_{X_h})_{\red}.\]
These yield maps of $\CC$-vector spaces
\[H^i(X_h,\CC_{X_h})\xrightarrow{\alpha} H^i(X_h,\O_{X_h}) \xrightarrow{\beta} H^i(X_h,(\O_{X_h})_{\red}).\]
The composition of these homomorphisms is surjective: Indeed by Hodge theory $H^i(X_h,(\O_{X_h})_{\red})$ is the space of harmonic $(i,0)$ forms, and Hodge decomposition  says that $\beta\alpha$ maps a harmonic $i$-form to its $(i,0)$ component (see the book of Arapura \cite{Ar} for unexplained terminology). Therefore $H^i(X_h,\O_{X_h}) \rightarrow H^i(X_h,(\O_{X_h})_{\red})$ is surjective. By \cite{GAGA} there are isomorphism of $\CC$-vector spaces $H^i(X,\O_X)\cong H^i(X_h,\O_{X_h})$ and $H^i(X,(\O_{X})_{\red})\cong H^i(X_h,(\O_{X_h})_{\red})$, thus we conclude.
\end{proof}

The smoothness assumption in Proposition \ref{hodge} is necessary, how demonstrates the following example due to Aldo Conca.

\begin{es}
Let $R=\QQ[x_1,\ldots ,x_6]$ and
\[\aa=(x_2x_4+x_3x_6, \ \ x_3^2-x_4^2, \ \ x_1^2+x_4x_5).\]
It turns out that $\aa$ is a complete intersection. Particularly, $H_{\mm}^2(R/\aa)_0=0$ because $R/\aa$ is a $3$-dimensional Cohen-Macaulay ring. However, using Macaulay2 \cite{mac}, one sees that $H_{\mm}^2(R/\bb)_0\cong \Ext_S^4(R/\bb,R)_{-6}$ is a $1$-dimensional $\QQ$-vector space, where
\[\bb=\sqrt{\aa}=(x_2x_4+x_3x_6, \ \ x_3^2-x_4^2, \ \ x_1^2+x_4x_5, \ \ x_2x_3+x_4x_6, \ \ x_1(x_2^2-x_6^2)).\]
Notice that $\Proj(R/\bb)$ is not smooth. Indeed, it is not even irreducible, though connected; one can check that the minimal prime ideals of $\bb$ are:
\[\mathfrak{p}_1=(x_3+x_4, \ x_2-x_6, \ x_1^2+x_4x_5), \ \ \ \mathfrak{p}_2=(x_1, \ x_3, \ x_4), \ \ \ \mathfrak{p}_3=(x_3-x_4, \ x_2+x_6, \ x_1^2+x_4x_5).\]
Notice that the same example works in every field $\kk$ of characteristic $0$. Indeed, setting $R_{\kk}=R\otimes_{\QQ}\kk$, the $\mathfrak{p}_iR_{\kk}$'s keep on being prime ideals, $\sqrt{\aa R_{\kk}}=\bb R_{\kk}$, $H_{\mm R_{\kk}}^2(R_{\kk}/\aa R_{\kk})_0=H_{\mm}^2(R/\aa)_0\otimes_{\QQ}\kk=0$ and 
\[\dim_{\kk}H_{\mm R_{\kk}}^2(R_{\kk}/\sqrt{\aa R_{\kk}})_0=\dim_{\kk}H_{\mm}^2(R/\bb)_0\otimes_{\QQ}\kk=1.\]
\end{es}

\end{document}